\documentclass{article}
\usepackage{latexsym, amssymb, enumerate, amsmath,amsthm,pdfsync, multicol}
\usepackage{fullpage}
\usepackage[utf8]{inputenc}     
\usepackage[T1]{fontenc}
\usepackage{lmodern}
\usepackage{graphicx}
\usepackage[english]{babel}
\usepackage[numbers]{natbib}
\usepackage{url}
\usepackage{esint}
\usepackage{dsfont}
\usepackage{float}
\usepackage{epsfig, epstopdf}
\usepackage{subfigure}
\usepackage{cleveref}
\usepackage[normalem]{ulem}
\usepackage{tikz}
\usetikzlibrary{patterns}

\usepackage{todonotes}



\sloppy



\crefname{hyp}{hypothesis}{hypotheses}
\crefname{thm}{theorem}{theorems}
\crefname{lem}{lemma}{lemmas}
\crefname{cor}{corollary}{corollaries}
\crefname{prop}{proposition}{propositions}
\Crefname{theorem}{Theorem}{Theorems}


\newcommand{\R}{\mathbb{R}}

\newcommand{\eps}{\epsilon}

\newcommand{\1}{\mathds{1}}

\newcommand{\ds}{\displaystyle}

\renewcommand{\epsilon}{\varepsilon}

\newcommand{\undu}{\underline{u}}

\newtheorem{thm}{Theorem}[section]
\newtheorem{prop}[thm]{Proposition}

\newtheorem{claim}[thm]{Claim}
\newtheorem{lemma}[thm]{Lemma}
\newtheorem{hyp}[thm]{Hypothesis}

\newtheorem{theorem}[thm]{Theorem}



\def\eps{{\varepsilon}}

\def\j{{\mathcal{J}}}

\def\D{{\mathcal{D}}}
\newcommand{\opd}[1]{\D[\,{#1}\,]}

\author{Emeric Bouin \footnote{CEREMADE - Universit\'e Paris-Dauphine, UMR CNRS 7534, Place du Mar\'echal de Lattre de Tassigny, 75775 Paris Cedex 16, France. E-mail: \texttt{bouin@ceremade.dauphine.fr}}
\and Jérôme Coville \footnote{UR 546 Biostatistique et Processus  Spatiaux, INRA, Domaine St Paul Site Agroparc, F-84000 Avignon, France. E-mail: \texttt{jerome.coville@inra.fr}}
\and Guillaume Legendre \footnote{CEREMADE - Universit\'e Paris-Dauphine, UMR CNRS 7534, Place du Mar\'echal de Lattre de Tassigny, 75775 Paris Cedex 16, France. E-mail: \texttt{legendre@ceremade.dauphine.fr}}}

\begin{document}
\title{Acceleration in integro-differential combustion equations}
\maketitle

\begin{abstract}
We study acceleration phenomena in monostable integro-differential equations with ignition nonlinearity. Our results cover fractional Laplace operators and standard convolutions in a unified way, which is also a contribution of this paper. To achieve this, we construct a sub-solution that captures the expected dynamics of the accelerating solution, and this is here the main difficulty. This study involves the flattening effect occurring in accelerated propagation phenomena.
\end{abstract}

\noindent {\bf Keywords:} integro-differential operators, fractional laplacian, acceleration, spreading.

\section{Introduction.}\label{sec:Intro}

In this paper, we are interested in describing quantitatively propagation phenomena in the following (non-local) integro-differential equation:
\begin{align}\label{eq:main}
&u_t(t,x) =\opd{u}(t,x) + f(u(t,x)) \quad \text{ for } \quad t>0, x\in\R,\\
&u(0,\cdot)=\mathds{1}_{(-\infty,0]},\nonumber
\end{align}
where $$\opd{u}(t,x):=P.V.\left(\int_{\R}[u(t,y)-u(t,x)] J(x-y)\,dy\right)$$
with $J$ is a nonnegative function satisfying the following properties.
\begin{hyp}\label{hyp:J}
Let $s \in [0,\frac12]$. The kernel $J$ is symmetric and is such that there exists positive constants $\j_0,\j_1$ and $R_0\ge 1$  such that 
\begin{equation*}
\int_{|z|\le 1}J(z)|z|^2\,dz\le 2\j_1 \quad \text{ and }\quad  \frac{\j_0}{|z|^{1+2s}}\1_{\{|z| \ge 1\} }\ge J(z)\ge \frac{\j_0^{-1}}{|z|^{1+2s}}\1_{\{|z|\ge R_0\} }.
\end{equation*}
\end{hyp}
The operator $\opd{}$ describes the dispersion process of the individuals. Roughly, the kernel $J$ gives the probability of a jump from a position $x$ to a position $y$, so that the tails of $J$ are of crucial importance to quantify the dynamics of the population. As a matter of fact, the parameter $s$ will thus appear in the rates we obtain later. One may readily notice that our hypothesis on $J$ allows to cover the two broad types of integro-differential operators $\opd{u}$ usually considered in the literature which are the fractional laplacian $(-\Delta)^s u$ and the standard convolution operators with integrable kernels often written $J \star u - u$. This universality is one main contribution of this paper. 

\begin{hyp}\label{hyp:f}
Take $\theta > 0$, 
\begin{equation*}
f(1) = 0, \qquad  f(u) = 0 \quad \text{if } u \leq \theta \qquad \text{and } f(u) > 0 \quad \text{in } [\theta,1].
\end{equation*} 
\end{hyp}

The strong maximum principle implies that the solution to \eqref{eq:main} takes values in $[0,1]$ only. Moreover, since the initial data is decreasing, at all times $t\in \R^+$, the function $x \mapsto u(t,x)$ is decreasing over $\R$, from one to zero. To follow the propagation, we may thus follow level sets of height $\lambda \in (0,1)$, 
\begin{equation*}
x_\lambda(t) := \sup\left\{ x \in \R, \, u(t,x) \geq \lambda \right\}.
\end{equation*}
Our main result is the following. 

\begin{theorem}\label{thm:main-igni}
Assume that $J$ satisfies \Cref{hyp:J} with $s < \frac12$ and that $f$ is an ignition nonlinearity. For any $\lambda\in(0,1)$, the level line $x_\lambda(t)$ accelerates with the following rate, 
\begin{equation*}
t^{\frac{1}{2s}} \lesssim x_\lambda(t) \lesssim t^{\frac{1}{2s}+0}.
\end{equation*}
\end{theorem}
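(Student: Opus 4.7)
The plan is to prove the two inequalities separately by the method of sub- and super-solutions, since \eqref{eq:main} enjoys a comparison principle. The lower bound $t^{1/(2s)} \lesssim x_\lambda(t)$ will follow from a sub-solution whose level sets advance at that rate, and the upper bound from a super-solution with a suitably slow algebraic spatial tail. As the abstract emphasizes, the main difficulty and the main technical content lies in the sub-solution: because $f$ is of ignition type, the sub-solution has to rise above the threshold $\theta$ at distances of order $t^{1/(2s)}$ in front of the initial datum \emph{with no help from the reaction} while it remains below $\theta$.

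To identify the correct scale, I first note that for any $x > R_0$ the fat-tail hypothesis in \Cref{hyp:J} and the monotonicity of $u(t,\cdot)$ (inherited from the initial datum) give
\begin{equation*}
\opd{u}(t,x) \;\geq\; \int_{-\infty}^{0}\!\bigl[u(t,y)-u(t,x)\bigr]\,J(x-y)\,dy \;\gtrsim\; \frac{C}{x^{2s}},
\end{equation*}
as long as $u(t,x)$ stays away from $1$. Integrating formally in time gives $u(t,x) \gtrsim C\,t/x^{2s}$, which crosses the ignition level precisely at $x \sim t^{1/(2s)}$. This is the heuristic I want to realize as a true sub-solution.

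Concretely, I would look for $\underline{u}$ built from three matched pieces: a left region where it equals $1-\eta$, a growing plateau at height $\theta$ covering an interval of length $a(t) \sim t^{1/(2s)}$, and an algebraic tail ahead of the form $g(t,x) \sim C\,t/\langle x\rangle^{2s}$ glued continuously to the plateau. Inside the plateau one has $f(\underline{u})=0$ while $\opd{\underline{u}} \geq 0$ by monotonicity, so the sub-solution inequality is immediate. The delicate step is the tail region, where $f(\underline{u})=0$ and one must compensate $\partial_t g$ by the dispersion contribution coming from the plateau and the constant region behind it. This requires the plateau length to be at least of order $t^{1/(2s)}$: it is precisely the \emph{flattening effect} mentioned in the abstract, and the main technical obstacle is to produce sharp two-sided estimates on $\opd{\underline{u}}$ at the matching points for a merely continuous profile, using \Cref{hyp:J} in its full generality (so as to treat the fractional Laplacian and the integrable convolution case in parallel). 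Once $\underline{u}$ is above $\theta$, a comparison with a travelling-wave-type sub-solution of an associated problem with bounded dispersal lifts it from $\theta$ up to any $\lambda \in (\theta,1)$ in bounded additional time, yielding $x_\lambda(t) \gtrsim t^{1/(2s)}$.

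For the upper bound I would build a super-solution of the algebraic form
\begin{equation*}
\overline{u}(t,x) \;=\; \min\!\bigl(1,\; A\,\langle x - B(t)\rangle^{-2s}\bigr),
\end{equation*}
where $B(t)$ is chosen slightly larger than $t^{1/(2s)}$ and $A$ is a fixed constant. The inequality $\partial_t \overline{u} \geq \opd{\overline{u}} + f(\overline{u})$ is checked by splitting the dispersion integral into a near and a far piece, using both sides of the bound on $J$ in \Cref{hyp:J}. Outside a neighbourhood of $B(t)$, $\overline{u}$ must remain below $\theta$ so that $f(\overline{u})=0$ switches off the nonlinearity; this constraint on $A/\langle x-B(t)\rangle^{2s}$ is what prevents $B(t)$ from equalling $t^{1/(2s)}$ exactly and forces a small (subpolynomial) correction, which is precisely the $+0$ loss in the exponent. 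Comparison then gives $x_\lambda(t) \lesssim B(t) \lesssim t^{1/(2s)+0}$, completing the proof.
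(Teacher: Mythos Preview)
Your heuristic for the $t^{1/(2s)}$ scale is right, but the sub-solution construction has a genuine gap. On the plateau at height $\theta$ you assert that ``$\opd{\undu}\ge 0$ by monotonicity, so the sub-solution inequality is immediate.'' Monotonicity of $\undu$ in $x$ does \emph{not} imply $\opd{\undu}\ge 0$. At a point $x$ near the right edge of the plateau, $\int(\undu(y)-\theta)J(x-y)\,dy$ picks up a positive contribution of order $x^{-2s}\sim 1/t$ from the far left where $\undu=1-\eta$, and a negative contribution of the \emph{same} order from the tail where $\undu<\theta$; the sign of the balance is undetermined by your construction. Since the plateau sits exactly at $\theta$, the reaction $f(\theta)=0$ cannot absorb any deficit, and the sub-solution inequality can fail at the leading edge. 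Your merely continuous matching is a further obstruction: when $J$ is singular at the origin (the fractional-Laplacian case included in \Cref{hyp:J}) one needs at least $\mathcal C^2$ regularity to make sense of and estimate $\opd{\undu}$ via Taylor expansion, which you acknowledge as an obstacle but do not resolve. The final lifting step, via a ``travelling-wave-type sub-solution of an associated problem with bounded dispersal,'' is left unspecified; note also that the paper needs the level set of height $\eps>\theta$ to be at $t^{1/(2s)}$, not merely on compacta, so a quantitative version would be required.

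The paper avoids all of this by a different two-piece construction. The constant left region is placed at a height $\eps$ \emph{strictly above} $\theta$, so that $f(\eps)>0$ supplies a positive buffer on $\{x\le X(t)\}$ and on an adjacent interval $\{X(t)\le x\le X_\eta(t)\}$ where one checks that $\undu\ge\theta+\eta$; this is what absorbs the negative part of $\opd{\undu}$ near the front. The right piece is not the bare algebraic tail $w(t,x)=\bigl[x^{2s}/(\kappa t)+\gamma\bigr]^{-1}$ but the cubic correction $3w\,(1-w/\eps+w^{2}/(3\eps^{2}))$, designed so that $\undu$ is $\mathcal C^{2}$ across $x=X(t)$; this regularity drives all the second-order Taylor estimates of $\opd{\undu}$. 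In the far field $x>\sup\{X(t)+R_0,X_\eta(t)\}$, where $f(\undu)=0$, one proves directly that $\opd{\undu}\gtrsim x^{-2s}$, and this dominates $\undu_t\sim\kappa\, x^{-2s}$ once the free parameter $\kappa$ is taken small. No two-step lifting is needed, since $X(t)=\bigl[\eps^{-1}-\gamma\bigr]^{1/(2s)}(\kappa t)^{1/(2s)}$ already tracks the level $\eps>\theta$. Finally, the paper does not prove the upper bound at all (it is deferred to later work), so your super-solution sketch is an addition rather than a comparison point.
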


The main purpose of this research report is to show the lower bound, the sharp upper bound will come in a later work. 

Let us review some existing works around this issue. Existence of fronts have been obtained for the fractional laplacian for $s>1/2$ by Mellet et al in \cite{Mellet2014} and for the convolution type operator $J\star u -u$ provided $J$ has a first moment by Coville \cite{Coville2007d,Chen1997}. See also some related works by Shen et al \cite{Shen2017,Shen2017a}. Here we explore a situation where no first moment at infinity exists, that is $s\le 1/2$. Thanks to the monostable results \cite{Coville2021,Gui2015}, we already know that accelerated propagation can only occur in this region of parameter. This is drastic contrast with truly monostable nonlinearities \cite{Cabre2009,Cabre2013}.


\begin{figure}[h]\centering
\begin{tikzpicture}[scale=0.8]
%
\draw[->,line width=0.2, dashed](-8,-0.1)--(8,-0.1) node[below]{$x$};
\draw [->,line width=2, blue] (0.1,2.5)--(2.5,2.5) node[anchor=south]{$\asymp t^{\frac{1}{2s}}$};

\draw[line width=1, dashed](-6,-1.5)--(-6,6.5) node[right]{$$};
\draw[line width=1, dashed](6,-1.5)--(6,6.5) node[right]{$$};

%
%
\draw [domain=-7.5:7.5,line width=3, samples=150, red] plot (\x, {(5*exp{-\x})/(1+exp{-\x})}) ;
\fill[blue] (0,2.5) circle (5pt);
\draw[<->,line width=1](-5.2,1)--(5.2,1) node[below]{$\sim t^{\frac{1}{2s}}$};


\end{tikzpicture}
\caption{Schematic view of the expected behaviour of solution at a given time $t$.}
\label{fig:view}
\end{figure}
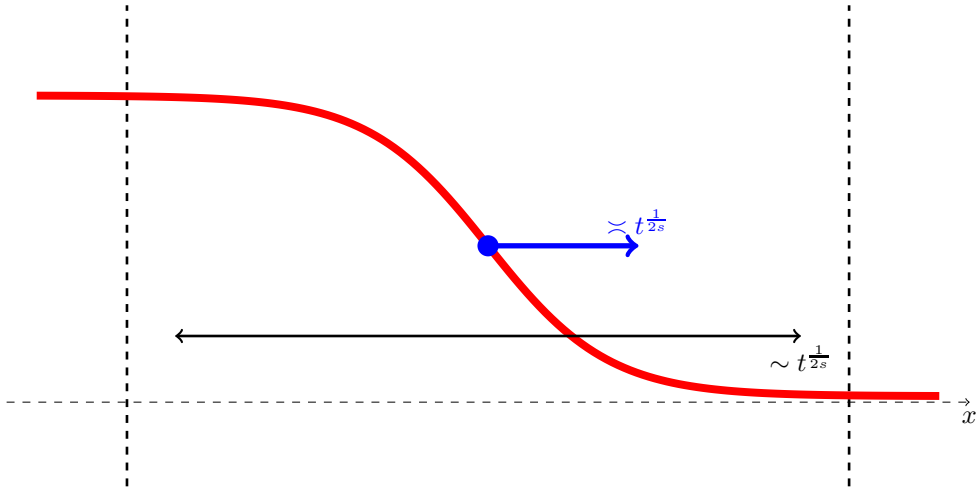


The rest of the paper is organised as follows. The following \Cref{sec:flatt} is about the behaviour of the linear problem. Then, \Cref{sec:strategy} describes in broad lines the construction of the sub-solution.

\section{Flattening in the linear problem}\label{sec:flatt}

Finally, the flattening behaviour of linear anomalous diffusions is reminiscent from \cite{Blumenthal1960,Kolokoltsov2000,Chasseigne2006,Bogdan2009}, and we shall here recall and extend some principal computations from these papers. 

The Fourier symbol of the operator $\opd{\cdot}$ is 
\begin{equation*}
\forall \xi \in \R, \qquad W(\xi) := \int_{\R} \left( \cos(\xi y) - 1 \right) J(y) \, dy.
\end{equation*}
Note that one may recover two typical cases. If $J(y) \propto \vert y \vert^{-1-2s}$, that is $\opd{\cdot}$ is a fractional Laplacian, then $W(\xi)= \vert \xi \vert^{2s}$. If $J$ is an integrable function with unit mass, as in convolution models, then $W(\xi)= \hat J(\xi) - 1$.
The presence of a singularity at $0$ for $J$ has an influence on large frequencies $\xi$, whereas the tail of $J$ influences small frequencies. As a consequence, 

For small $\xi$, write, 
\begin{equation*}
\int_{\R} \left( \cos(\xi y) - 1 \right) J(y) \, dy = \int_{\vert y \vert \leq R_0} \left( \cos(\xi y) - 1 \right) J(y) \, dy + \int_{\vert y \vert \leq R_0} \left( \cos(\xi y) - 1 \right) J(y) \, dy
\end{equation*}
The first integral in the r.h.s is of order $\vert \xi \vert^2$ with explicit constant by a direct Taylor expansion and using the hypothesis on $J$. The second one is estimated as follows. Since
\begin{equation*}
\int_{\vert y \vert \leq R_0} \j_0^{-1} \frac{\cos(\xi y) - 1}{\vert y \vert^{1+2s}} \, dy \leq \int_{\vert y \vert \leq R_0} \left( \cos(\xi y) - 1 \right) J(y) \, dy \leq \int_{\vert y \vert \leq R_0} \j_0 \frac{\cos(\xi y) - 1}{\vert y \vert^{1+2s}} \, dy,
\end{equation*}
we have that $\int_{\vert y \vert \leq R_0} \left( \cos(\xi y) - 1 \right) J(y) \, dy$ is of order $- \vert \xi \vert^{2s}$ with explicit estimates. As a consequence, since $s \leq \frac12$, $W$ is of order $- \vert \xi \vert^{2s}$ with explicit estimates.

In this section, we discuss the fact that the solution to the following linearised problem 
\begin{align}
&G_t =\opd{G}  \quad \text{ for } \quad t>0, x\in\R,\\
&G(0,\cdot)= \delta_{x=0},\nonumber
\end{align}
flattens, that is, there exists $C_0 \in \R^+$ such that
\begin{equation*}
\forall t \in \R^+, \exists x_0 \in \R^+, \qquad G(t,x) \geq \frac{C_0 t}{\vert x \vert^{1+2s}}.
\end{equation*}
The proof takes its spirit in \cite[Proposition 2.2]{Kolokoltsov2000} and we explain the arguments below. Note that then the solution to 
\begin{align}
&v_t =\opd{v}  \quad \text{ for } \quad t>0, x\in\R,\\
&v(0,\cdot)=\mathds{1}_{(-\infty,0]},\nonumber
\end{align}
is then given by 
\begin{equation*}
v(t,x) = G(t,\cdot) \star \mathds{1}_{(-\infty,0]}(\cdot) (x) = \int_x^{+\infty} G(t,y) \, dy. 
\end{equation*}
From this computation, observe that for any $t$, we have $\lim_{x \to -\infty} v(t,x) = 1$. Getting an estimate of $G$ for very large $y$ will yield an estimate for $v$.

Solving in the Fourier variable, this is 
\begin{equation*}
\forall \xi \in \R, \qquad \hat G(t,\xi) = \exp\left( W(\xi) t \right).
\end{equation*}
Observe that, formally,
\begin{equation*}
G(t,x) = \int_{\R} \exp\left( W(\xi) t - i x \xi \right) \, d \xi = 2 \text{Re}\left( \int_{0}^\infty \exp\left( W(\xi) t - i x \xi \right) \, d \xi \right).
\end{equation*}
Compute, for any $x \neq 0$,  
\begin{align*}
\int_0^R \exp\left( W(\xi) t - i x \xi \right) \, d\xi &= \frac{1}{x} \int_0^{Rx} \exp\left( W\left( \frac{u}{x} \right)t - i u \right) \, du
\end{align*}
Following the same steps as in \cite{}, we shall use a contour integral in the complex plane. Define the holomorphic function $\varphi(z) := \exp\left( W\left( \frac{z}{ix} \right)t - z \right)$ that we integrate on the contour ... . The Cauchy integral theorem gives 
\begin{align*}
&\frac{1}{x} \int_0^{Rx} \exp\left( W\left( \frac{u}{x} \right)t - i u \right) \, du\\
&= \frac{1}{ix} \int_0^{Rx} \exp\left( W\left( \frac{u}{ix} \right)t - u \right) \, du + \frac{1}{ix} \int_0^{\frac{\pi}{2}} \exp\left( W\left( \frac{Rx e^{i\theta}}{ix} \right)t - Rx e^{i\theta}  \right) \, i Rx e^{i\theta} d\theta\\ 
&= \frac{1}{ix} \int_0^{Rx} \exp\left( W\left( \frac{u}{ix} \right)t - u \right) \, du +  \int_0^{\frac{\pi}{2}} \exp\left( W\left( R e^{i\theta-i\frac{\pi}{2}} \right)t - Rx e^{i\theta}  \right) \, R e^{i\theta} d\theta
\end{align*}
The second integral goes to zero by the Lebesgue dominated convergence theorem. Taking $R \to \infty$, we get that the following integral exists and that 
\begin{align*}
2 \text{Re}\left( \int_0^\infty \exp\left( W(\xi) t - i x \xi \right) \, d\xi \right)
&= -\frac{2}{x} \text{Re}\left( i \int_0^{\infty} \exp\left( W\left( \frac{u}{ix} \right)t - u \right) \, du \right)\\
&= \frac{2}{x} \int_0^{\infty} \text{Im}\left( \exp\left( W\left( \frac{u}{ix} \right)t \right) \right) e^{-u} \, du\\
&= \frac{2}{x} \int_0^{\infty}  \exp\left( \text{Im}\left(W\left( \frac{u}{ix} \right)t \right)\right) \sin\left( \text{Im}\left(W\left( \frac{u}{ix} \right)t \right)\right) e^{-u} \, du 
\end{align*}
From the latter, we deduce that 
\begin{align*}
&\lim_{ x\to \infty} \left( \frac{2x^{1+2s} }{t}  \text{Re}\left( \int_0^\infty \exp\left( W(\xi) t - i x \xi \right) \, d\xi \right) \right)\\
&= \lim_{ x\to \infty} \frac{2}{t} x^{2s} \int_0^{\infty}  \exp\left( \text{Im}\left(W\left( \frac{u}{ix} \right)t \right)\right) \sin\left( \text{Im}\left(W\left( \frac{u}{ix} \right)t \right)\right) e^{-u} \, du\\
&\gtrsim 2 \int_0^{\infty} \sin(s\pi) u^{2s} e^{-u} \, du
\end{align*}
using the scaling of $W$ near zero and the dominated convergence theorem. 

\section{The strategy for the construction of sub-solutions.}\label{sec:strategy}

In this section, we present the way that we construct a sub-solution to prove the lower bounds in \Cref{thm:main-igni}. We are looking for a sub-solution $\undu$ to \eqref{eq:main} that satisfies everywhere
\begin{equation}\label{bcl2-eq:subsol-igni} 
\undu_t \leq \opd{\undu} + f(\undu) \qquad \text{and} \qquad \undu \leq \eps, 
\end{equation}
for some $\eps \in (\theta,1)$ and $t>t^*$. We construct an at least of class $\mathcal{C}^2$ function $\undu$ piecewise,
\begin{align*}
&\undu := \eps, \qquad \text{on } \left\{ x \leq X(t) \right\},\\
&\undu := \phi, \qquad \text{else},
\end{align*}
with $\phi(t,X(t))=\eps$. The point $X(t)$ is unknown at that stage. As previously for the monostable case, we expect $\phi$  to look like a solution of the standard fractional Laplace equation with Heaviside initial data at the far edge. In this situation, a natural candidate would be given by 
\begin{equation}\label{bcl2-def:w-igni}
w(t,x):= \left[\frac{x^{2s}}{\kappa t} +\gamma\right]^{-1}.
\end{equation}
with $\kappa,\gamma$ positive free parameter that will be determined later on.
Note that this function is well defined for $t\ge 1$ and $x>0$. 
The expected decay in space of a solution of the standard fractional Laplace equation with Heaviside initial data being at least of order $tx^{-2s}$, such a $w$ would have the good asymptotics. 
For $\eps\in (0,1)$, let us define $X(t)>0$ such that $w(t,X(t))=\eps$. For such $X(t)$ to be well defined, we need to  impose that $\gamma<\frac{1}{\eps}$, and thus  for such $\eps$ and $\gamma<\frac{1}{\eps}$, $X(t)$ is then defined by the following formula 
\begin{equation}\label{bcl2-def:X-igni}
X(t)=\left[\eps^{-1} -\gamma\right]^{\frac{1}{2s}}(\kappa t)^{\frac{1}{2s}}.
\end{equation}
One may observe that $X(t)$ moves with the speed that we expect in \Cref{thm:main-igni}. However, taking $\phi$ as this $w$ would not lead to a $\mathcal{C}^2$ function at $x=X(t)$. As in the monostable case,  to remedy this issue, we complete our construction by taking $\phi$ such that
\begin{equation}\label{bcl2-def:undu-igni}
\undu(t,x):=\begin{cases} \eps &\quad\text{ for all } x\le X(t),\medskip\\ 
\ds{3\left(1-\frac{1}{\eps}w(t,x)+\frac{1}{3\eps^2}w^{2}(t,x)\right)w(t,x)} &\quad\text{ for all } x>X(t),
\end{cases}
\end{equation}
for $t > 1$.

\section{Proof of \Cref{thm:main-igni}.}\label{sec:Proof}

Start by observing that $\undu$ defined in \eqref{bcl2-def:undu-igni} satisfies \eqref{bcl2-eq:subsol-igni} if and only if, 
\begin{align}
0 &\leq \opd{\undu} + f(\eps), &x \leq X(t),\label{eq:subsolleft-igni}\\
\undu_t &\leq  \opd{\undu}  + f(\undu), & \text{else}\label{eq:subsolright-igni} .
\end{align}
As a consequence, again the main work is to derive good estimates for $\opd{\undu}$ in both regions $x \leq X(t)$ and $x \geq X(t)$. The estimate in the first region will be rather direct to get and will rely mostly on the fact that $\undu$ is constant there together with the tails of $J$. In the latter region, things are more intricate. We have to split it into two zones, as depicted on \Cref{fig:zones-igni} below, each one been the stage of one specific character of the model and thus demanding a specific way to estimate $\opd{\undu}$. 


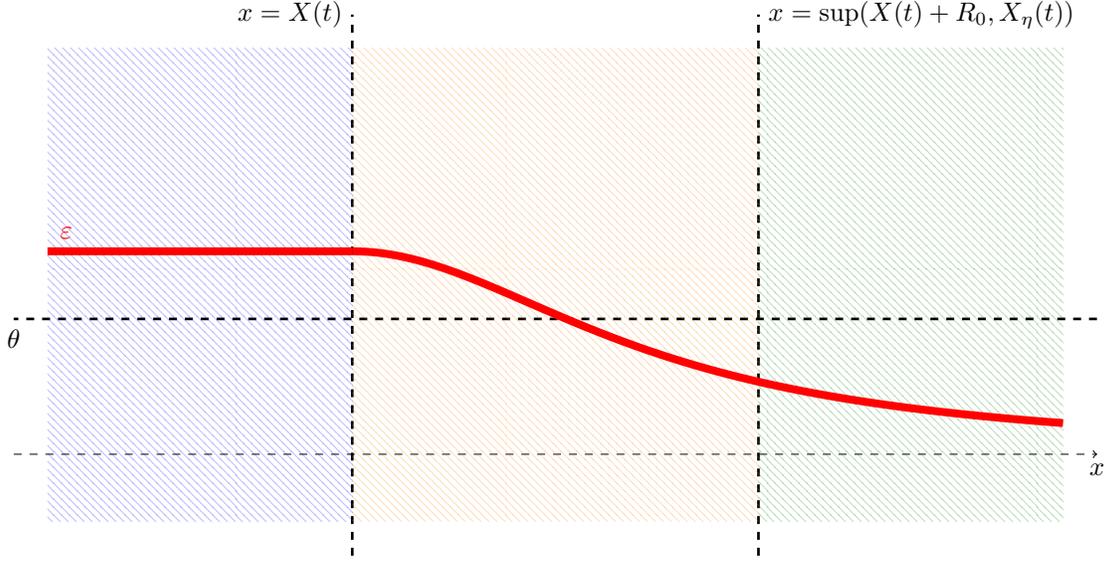
\begin{figure}[h]\centering
\begin{tikzpicture}[scale=0.9]
%
\draw[->,line width=0.2, dashed](-8,0)--(8,0) node[below]{$x$};
\draw[line width=1, dashed](8,2)--(-8,2) node[below]{$\theta$};

\draw[line width=1, dashed](-3,-1.5)--(-3,6.5) node[left]{$x=X(t)$};
\draw[line width=1, dashed](3,-1.5)--(3,6.5) node[right]{$x= \sup(X(t)+R_0,X_\eta(t))$};
%
\fill[pattern=north west lines, pattern color=blue, opacity=0.5] 
(-7.5,-1)--(-3,-1)--(-3,6)
-- (-7.5,6)
-- cycle;

\fill[pattern=north west lines, pattern color=orange, opacity=0.5] 
(-3,-1)--(-3,6) -- (0,6)--(0,-1)
-- cycle;

\fill[pattern=north west lines, pattern color=orange,opacity=0.5] 
(0,-1)--(0,6)--(3,6)--(3,-1)-- cycle;


\fill[pattern=north west lines,pattern color=black!50!green, opacity=0.5] 
(3,-1)--(3,6)--(7.5,6)--(7.5,-1)-- cycle;

%
%
\draw [domain=-3:7.5,line width=3, samples=150, red] plot (\x, {3/(1+0.05*(\x+3)^2)}) ;
\draw [line width=3,red] (-3,3)--(-7.5,3) node[anchor=south west]{$\varepsilon $};
\end{tikzpicture}
\caption{Schematic view of the sub-solution at a given time $t$. Several zones have to be considered. The exact expression of $X_\eta(t)$ will appear naturally later. The blue zone is where $\undu$ is constant, making computations easier. In the orange zone, the fact that $\undu >\theta$  is crucial. In the green (far-field) zone, the decay imitating a fractional Laplace equation gives the right behaviour.}
\label{fig:zones-igni}
\end{figure}


Let us now show that for the right choice of $\eps$ and $\kappa$ the function $\undu$ is indeed a subsolution to \eqref{bcl2-eq:subsol-igni} for all $t\ge 1$.

\subsection{Some preliminary estimates.}

\subsubsection{Facts and formulas on $X$ and $w$.}
As in the previous construction, let us recall some useful facts.
From direct computations we have:
\begin{align}
&\undu_t =   \undu_x = \undu_{xx} =0 &\; \text{ for all } \; t>0, x<X(t)\label{bcl2-def:u_tx-igni}\\
&\undu_t = 3 w_t \left(1- \frac{w}{\eps}  \right)^2, \qquad \undu_x = 3 w_x \left(1- \frac{w}{\eps}  \right)^2,&\; \text{ for all } \; t>1, x>X(t)\label{bcl2-def:u_t-u_x-igni}\\
&\undu_{xx}(t,x)= 3\left(1-\frac{w }{\eps}\right)\left[w_{xx} \left(1- \frac{w}{\eps} \right)-\frac{2 w_x^2}{\eps}\right]&\; \text{ for all } \; t>1, x>X(t)\label{bcl2-def:u_xx-igni}
\end{align}
Note crucially that $\undu$ is then at a $\mathcal{C}^{2}$ function in $x$ and $\mathcal{C}^1$ in $t$. 
We will also need repeatedly the following information on derivatives of $w$ at any point $(t,x)$ where $w$ is defined. 
\begin{align}
&w_t = \kappa w^2(t,x) \frac{x^{2s}}{(\kappa t)^2}\label{bcl2-def:w_t-igni}\\
&w_x =-2sw^2(t,x)\frac{x^{2s-1}}{\kappa t} \label{bcl2-def:w_x-igni}\\
&w_{xx} = 4s^2 w^{3}(t,x) \frac{x^{4s -2}}{(\kappa t)^2} + 2s(2-2s)w^2(t,x) \frac{x^{2s-2}}{\kappa t}  \label{bcl2-def:w_xx-igni}
\end{align}
Observe that since $s\leq 1/2$, we deduce from the latter that $w$ is convex in $[X(t),+\infty)$.  
Moreover by using the definition of $X(t)$ we also deduce that for $t\ge 1$,

\begin{equation}
w_x(t,X(t))  = \frac{-2s\eps \left[1-\gamma\eps\right]}{X(t)}.\label{bcl2-eq:esti-partial_w1-igni}
\end{equation}

\subsubsection{An estimate for $w$ on $[X(t)+1,+\infty)$.}

\begin{prop}\label{bcl2-prop:w2-igni}
For all $\kappa>0$ and all $\eps \in (0,1)$, $\gamma<\frac{1}{\eps}$,  we have for all $\, x\ge X(t)$,
\begin{align*}
 w(t,x)\le \frac{(1-\gamma\eps)\kappa t}{x^{2s}}.
\end{align*}
\end{prop}
\begin{proof}  
By using \eqref{bcl2-def:w-igni}, the definition of $w$, since we have 
$$
w(t,x)=  \frac{\kappa t }{x^{2s}} \left(1+\frac{\gamma \kappa t }{x^{2s}} \right)^{-1}
$$
Since $x\ge X(t)$,

$$
w(t,x)\le  \frac{\kappa t }{x^{2s}} \left(1+\frac{\gamma t\kappa}{X(t)^{2s}} \right)^{-1}\le  \frac{\kappa t (1-\eps\gamma)}{x^{2s}}.
$$

\end{proof}

%
%

\subsection{Estimating $\opd{\undu}$ when $x \leq X(t)$.}

On this region, by definition of $\undu$, we have 
\begin{equation*}
\opd{\undu}(t,x) = \int_{y \geq X(t)} [\undu(t,y) - \eps] J(x-y) \, dy.
\end{equation*}
This section aims at showing \eqref{eq:subsolleft-igni}. For the convenience of the reader, we shall state this is the following
\begin{prop}\label{bcl2-prop:esti-frac1-igni}
For all, $0<s\le \frac{1}{2},\, 1>\eps>  \theta, \gamma<\frac{1}{\eps}$ and $\kappa$ there exists $t_0(\eps,\kappa,s,\gamma)$ such that   for all $t\ge t_0$
$$ \opd{\undu}(t,x) + f(\eps) \ge 0 \quad \text{ for all }\quad x\le X(t).$$
\end{prop}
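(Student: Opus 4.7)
The starting point is that $\undu \equiv \eps$ on $\{x \leq X(t)\}$, so
\begin{equation*}
-\opd{\undu}(t,x) = \int_{y > X(t)} [\eps - \undu(t,y)] \, J(x-y) \, dy \geq 0.
\end{equation*}
Since $f(\eps) > 0$ is a fixed constant, it suffices to produce a bound of the form $-\opd{\undu}(t,x) \leq C(\eps,\kappa,\gamma,s) \, X(t)^{-2s}$, uniform in $x \leq X(t)$, and then choose $t_0$ large so that $C \, X(t_0)^{-2s} \leq f(\eps)$. A first useful observation is the algebraic identity
\begin{equation*}
\undu(t,y) = \eps\bigl[\,1 - (1 - w(t,y)/\eps)^3\,\bigr], \qquad \text{so} \qquad \eps - \undu(t,y) = \eps \, \eta(t,y)^3,
\end{equation*}
where $\eta := 1 - w/\eps$ vanishes at $y = X(t)$; this cubic vanishing reflects the $\mathcal{C}^2$ matching built into \eqref{bcl2-def:undu-igni}.

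Next I would establish a pointwise bound on $\eta$. Using the explicit form of $w$ and the identity $X(t)^{2s} + \gamma \kappa t = \kappa t/\eps$ one rewrites $\eta(t,y) = (y^{2s} - X(t)^{2s})/(y^{2s} + \gamma\kappa t)$, and by concavity of $u \mapsto u^{2s}$ (where the restriction $s \leq \tfrac12$ enters) I would obtain
\begin{equation*}
\eta(t,y) \leq 2s(1-\gamma\eps) \cdot \frac{y - X(t)}{X(t)}, \qquad y \geq X(t).
\end{equation*}
Together with the trivial bound $\eta \leq 1$, this encodes the cubic flattening needed for the estimate. Setting $\beta := X(t) - x \geq 0$ and $z := y - X(t) \geq 0$, the integral to be estimated becomes $\eps \int_0^\infty \eta(z)^3 \, J(-(z+\beta)) \, dz$, and I would split it at $z^* := X(t)/(2s(1-\gamma\eps))$.

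On $\{z \geq z^*\}$, use $\eta \leq 1$ and the tail estimate $J(z) \leq \j_0 |z|^{-1-2s}$ from \Cref{hyp:J} to bound the contribution by $C \, X(t)^{-2s}$. On $\{z \leq z^*\}$, use the cubic bound $\eta^3 \leq C \, z^3/X(t)^3$ and further split at $z+\beta = 1$. For $z + \beta \leq 1$ (only possible when $\beta < 1$), bound $z^3 \leq (z+\beta)^2$ and apply the moment estimate $\int_{|w|\leq 1} w^2 J(w) \, dw \leq 2 \j_1$. For $z + \beta \geq 1$, apply the tail estimate and bound $z^3 (z+\beta)^{-1-2s} \leq (z+\beta)^{2-2s}$ before integrating over $[0, z^*]$. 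Each region contributes at most $C \, X(t)^{-2s}$.

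The main obstacle is the near-field case: when $x$ lies close to $X(t)$, the singularity of $J$ at the origin (which can be essentially as strong as $|z|^{-1-2s}$ for the fractional Laplacian) sits inside the integration domain. What saves the estimate is precisely the cubic vanishing $\eps - \undu(t,y) \sim (y-X(t))^3/X(t)^3$, combined with $(y-X(t))^3 \leq (y-x)^3$ for $x \leq X(t) \leq y$: together they produce an integrand that is absorbable by the moment condition on $J$ near zero. This compensation is exactly the reason for which $\undu$ was built to match the plateau $\eps$ up to second order at $x = X(t)$.
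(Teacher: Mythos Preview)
Your route is genuinely different from the paper's. The paper never uses the cubic identity $\eps-\undu=\eps(1-w/\eps)^3$; instead it Taylor-expands $\undu$ in $x$. For $x\le X(t)$ it splits the integral at a fixed scale $B>1$: the tail $|z|\ge B$ contributes at most $\eps\j_0/(sB^{2s})$, while on $|z|\le B$ the first-order term cancels by the symmetry of $J$ and the second-order remainder is bounded by a constant times $\sup_{|\xi|<B,\ x+\xi>X(t)}w_x(t,x+\xi)^2=w_x(t,X(t))^2\sim X(t)^{-2}$. Choosing $B$ so that $\eps\j_0/(sB^{2s})\le f(\eps)/2$, one is left with a term vanishing as $t\to\infty$. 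Your direct pointwise argument is more transparent, avoids the Taylor machinery altogether, and yields a clean $O(X(t)^{-2s})$ bound rather than the paper's $f(\eps)/2+O(X(t)^{-2})$; it exploits the cubic matching at $X(t)$ explicitly, whereas the paper only sees it through $\undu_{xx}$.

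There is, however, a gap in your claim of uniformity in $x$. In the sub-region $\{z\le z^*,\ z+\beta\ge 1\}$ you bound $z^3(z+\beta)^{-1-2s}\le(z+\beta)^{2-2s}$ and integrate over $[0,z^*]$; this produces $\tfrac{C}{X^3}\int_0^{z^*}(z+\beta)^{2-2s}\,dz$, which is \emph{not} $O(X^{-2s})$ when $\beta=X(t)-x$ is large (for $\beta\gg z^*$ it behaves like $C\beta^{2-2s}/X^{2}\to\infty$). The fix is immediate: when $\beta\ge z^*$ simply use the crude bound $\eta\le 1$ throughout together with the tail estimate on $J$, giving $-\opd{\undu}(t,x)\le \eps\j_0/(2s\beta^{2s})\le C\,X(t)^{-2s}$. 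Your detailed splitting is only needed for $0\le\beta\le z^*$, and there your bound $\int_0^{z^*}(z+\beta)^{2-2s}\,dz\le (2z^*)^{3-2s}/(3-2s)=C'X^{3-2s}$ does deliver the claimed $O(X^{-2s})$.
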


\begin{proof}

Recall that by \eqref{bcl2-eq:esti1}, we have 
\begin{equation*}
 \opd{\undu}\geq  - \frac{\eps\j_0}{2s}  \frac{1}{B^{2s}} -  \frac{3}{\eps}  \left(\j_1+\j_0 \int_{1}^{B} z^{1-2s}\, dz\right)w_x(t,X(t))^2.
\end{equation*}

We are now ready to choose $B:=\left(\frac{\eps(\j_0 +1)}{sf(\eps)}\right)^{\frac{1}{2s}}$ above. We then get 
$$
 \opd{\undu} \ge - \frac{f(\eps)}{2} -  \frac{3}{\eps}\left(\j_1+\j_0 \int_{1}^{B} z^{1-2s}\, dz\right)w_x(t,X(t))^2.
$$
Set for legibility $C_0:=3\left(\j_1+\j_0 \int_{1}^{B} z^{1-2s}\, dz\right),$ and use the estimate \eqref{bcl2-eq:esti-partial_w1-igni} on $ w_x(t,X(t))$, to get 
\begin{align*}
 \opd{\undu} + f(\eps)&\ge  \frac{f(\eps)}{2} -  \frac{C_0}{\eps} w_x(t,X(t))^2,\\
 &\ge \frac{f(\eps)}{2} -\frac{4C_0s^2\eps^2 \left[ 1-\gamma\eps \right]^2}{X^2(t)}.
\end{align*}
the proposition is proved by taking  $t$ large since $X(t)\to +\infty$. 

\end{proof}


%
%
%
%

\subsection{Estimate of $\opd{\undu}$ on $x>X(t)$.}

In this region, as exposed earlier and shown in \Cref{fig:zones-igni}, we shall estimate $\opd{\undu}$ in two separate intervals 
\begin{equation*}
[X(t), \sup\{X+R_0;X_\eta(t)\}],  \qquad [\sup\{X_\eta(t),X(t)+R_0\},+\infty ).
\end{equation*}
with $X_\eta$ to be chosen.

\subsubsection{The region $X(t)\le x\le \sup\left\{X(t)+R_0;X_{\eta}\right\}$}
For all $\eps>\theta$, let $\eta(\eps)>0$ be the smallest positive root of the polynomial function $z \mapsto \frac{1}{\eps^2}z^3+z -(\eps-\theta)$. Then
 for such $\eta$,
\begin{align*}
3(\eps-\eta)\left(1 -\frac{\eps -\eta}{\eps} +\frac{\eps -\eta}{3\eps^2} \right)= 3(\eps-\eta)\left(\frac{1}{3} +\frac{\eta}{3\eps} + \frac{\eta^2}{3\eps^2} \right)
 &= (\eps-\eta)\left(1 +\frac{\eta}{\eps}+ \frac{\eta^2}{\eps^2} \right)\\
&=\eps -\frac{\eta^3}{\eps^2}\\
&= \theta +\eta
\end{align*}
 
Let $X_\eta(t)$ be such that $w(t,X_{\eta}(t))=\eps - \eta$, then by construction we have $X_\eta>X(t)$ and from the above computation, $\undu(t,x)\ge \theta +\eta$ for all $x\le X_\eta(t)$.

We start this with an estimate
\begin{lemma}
For all $B>1$ and $\delta\ge \sup\{R_0,B+X(t)-x\}$,
\begin{equation}\label{bcl-eq:esti-frac2}
\opd{\undu} (t,x) \ge -\frac{\j_0\undu(t,x)}{s B^{2s}}-\frac{3}{\eps}\left(\j_1+\j_0 \int_{1}^{B} z^{1-2s}\, dz\right)\sup_{\stackrel{-B<\xi<B,}{x+\xi>X(t)}} \left( w_x(t,x+\xi)\right)^2.
\end{equation}
\end{lemma}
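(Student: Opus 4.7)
The plan is a standard two-scale decomposition of the nonlocal operator at the radius $B$, with the singular behaviour near the origin controlled by the $\mathcal{C}^2$-regularity of $\undu$ and the far-field behaviour controlled by the algebraic tail of $J$. Changing variables $y=x+\xi$ and symmetrising the near piece (thanks to $J$ being even), one writes
\begin{equation*}
\opd{\undu}(t,x) = \underbrace{\int_{|\xi|>B} [\undu(t,x+\xi) - \undu(t,x)]\, J(\xi)\, d\xi}_{I_{\text{far}}} + \underbrace{\tfrac{1}{2}\int_{|\xi|\leq B} [\undu(t,x+\xi) + \undu(t,x-\xi) - 2\undu(t,x)]\, J(\xi)\, d\xi}_{I_{\text{near}}}.
\end{equation*}

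For $I_{\text{far}}$, using $\undu\ge 0$ together with the upper bound $J(\xi)\leq \j_0|\xi|^{-1-2s}$ on $\{|\xi|\ge 1\}$ from \Cref{hyp:J} (applicable since $B>1$), a one-line computation gives
\begin{equation*}
I_{\text{far}} \geq -\undu(t,x)\int_{|\xi|>B}\j_0|\xi|^{-1-2s}\, d\xi = -\frac{\j_0\,\undu(t,x)}{sB^{2s}},
\end{equation*}
which is precisely the first term of the announced lower bound.

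For $I_{\text{near}}$, I would apply the second-order Taylor formula with integral remainder,
\begin{equation*}
\undu(t,x+\xi)+\undu(t,x-\xi)-2\undu(t,x) = \int_0^\xi (\xi-\tau)\bigl[\undu_{xx}(t,x+\tau)+\undu_{xx}(t,x-\tau)\bigr]\, d\tau,
\end{equation*}
valid globally because $\undu$ is $\mathcal{C}^2$ across $X(t)$ by construction (see \eqref{bcl2-def:u_xx-igni}). The key input is then a pointwise lower bound on $\undu_{xx}$: it vanishes on $\{y<X(t)\}$, while on $\{y>X(t)\}$ the explicit formula \eqref{bcl2-def:u_xx-igni} combined with $w_{xx}\ge 0$ on $[X(t),+\infty)$ (which follows from \eqref{bcl2-def:w_xx-igni} and $s\leq 1/2$) and $1-w/\eps\in[0,1]$ yields an inequality of the form $\undu_{xx}(t,y) \gtrsim - w_x(t,y)^2/\eps$ with an explicit constant. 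Plugging this in, pulling $\sup w_x^2$ out of the integral while restricting to the set $\{x+\tau>X(t)\}$ (where $\undu_{xx}$ is not identically zero), and using the $\xi^2$-moment bounds of \Cref{hyp:J}, namely $\int_{|\xi|\le 1} \xi^2 J(\xi)\,d\xi \leq 2\j_1$ and $\int_{1\le |\xi|\le B} \xi^2 J(\xi)\,d\xi \leq 2\j_0 \int_1^B z^{1-2s}\,dz$, produces the second term.

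The main obstacle I anticipate is the bookkeeping around the piecewise definition of $\undu$: the Taylor expansion is valid across $X(t)$, but the second-derivative lower bound must be applied separately on the two regions, and the restriction of the sup to $\{x+\xi>X(t)\}$ is exactly what makes the estimate sharp. The parameter $\delta$ appearing in the hypothesis of the lemma, constrained by $\delta\ge \sup\{R_0, B+X(t)-x\}$, presumably plays the role of an auxiliary scale ensuring compatibility of this restriction with the geometry of the splitting, so that no contribution from the left constant region is accidentally counted.
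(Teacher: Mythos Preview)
Your argument is correct and follows the same near/far decomposition at scale $B$ as the paper, with the same pointwise bound $\undu_{xx}\ge -\tfrac{6}{\eps}w_x^2$ on $\{y>X(t)\}$ coming from \eqref{bcl2-def:u_xx-igni} together with the convexity of $w$. The only difference is cosmetic: the paper first shifts the splitting point to $X(t)-\delta$ (extracting a nonnegative contribution from the region where $\undu\equiv\eps$, then discarding it) and uses the constraint $\delta\ge B+X(t)-x$ precisely so that the near-field domain becomes the full symmetric interval $[-B,B]$, which makes the odd first-order Taylor term $\undu_x(t,x)\int zJ(z)\,dz$ vanish; your direct symmetrisation of the near integrand achieves the same cancellation without ever invoking $\delta$, which indeed does not appear in the conclusion of the lemma.
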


\begin{proof}
By definition of $\undu$, for any $\delta \ge R_0$ we have, using \Cref{hyp:J}, 
\begin{align}
\opd{\undu}(t,x)&= \int_{x+z \leq X(t)-\delta} \frac{\eps - \undu(t,x)}{ \vert  z\vert^{1+2s}} J(z)|z|^{1+2s} \, dz + \int_{x+z \geq X(t)-\delta} [\undu(t,x+z) - \undu(t,x)] J(z) \, dz,\nonumber\\
&= \frac{\j_0^{-1}}{2s} \frac{\eps - \undu(t,x)}{(x-X(t)+\delta)^{2s}} + \int_{x+z \geq X(t)-\delta} [\undu(t,x+z) - \undu(t,x)]J(z) \, dz.\label{bcl-def:frac}
\end{align}

We shall now estimate
$$\int_{x+z \geq X(t)-\delta} [\undu(t,x+z) - \undu(t,x)] J(z) \, dz.$$
For $B\ge 0$ to be chosen later on, we decompose,
\begin{multline}\label{bcl2-eq:esti2}
\int_{x+z \geq X(t)-\delta} [\undu(t,x+z) - \undu(t,x)] J(z) \, dz = \int_{x+z \geq X(t)-\delta, \vert z\vert \leq B} [\undu(t,x+z) - \undu(t,x)]J(z) \, dz \\+ \int_{x+z \geq X(t)-\delta,\vert  z\vert \geq B} [\undu(t,x+z) - \undu(t,x)] J(z) \, dz.
\end{multline}

The second integral in the right hand side of the above expression is the easiest. Since $\undu $ is positive and $J$ satisfies \eqref{hyp:J} we then have  for $B>1$, 
$$
\int_{x+z \geq X(t)-\delta, \vert  z\vert \geq B} [\undu(t,x+z) - \undu(t,x)]J(z) \, dz \geq - \undu(t,x)\j_0 \int_{x+z \geq X(t)-\delta, \vert  z\vert \geq B} \frac{dz}{ \vert  z\vert^{1+2s}}.$$

\noindent When  $ X(t)-\delta\le x-B$, a short computation shows that 
\begin{align*}
\int_{x+z \geq X(t)-\delta, \vert  z\vert \geq B} \frac{dz}{ \vert  z\vert^{1+2s}} &= \int_{X(t)-x-\delta \leq z \leq-B} \frac{dz}{ \vert  z\vert^{1+2s}} + \int_{z \geq B} \frac{dz}{ \vert  z\vert^{1+2s}}\\
&= \int_{X(t)-x-\delta \leq z \leq -B} \frac{dz}{ z^{1+2s}} + \int_{z \geq B} \frac{dz}{ z^{1+2s}}\\
&= \frac{1}{2sB^{2s}} - \frac{1}{2s(x+\delta -X(t))^{2s}}  +\frac{1}{2sB^{2s}}.
\end{align*}
On the other hand if $  X(t)-\delta\ge x -B$ then 
$$
\int_{x+z \geq X(t)-\delta, \vert  z\vert \geq B} \frac{dz}{ \vert  z\vert^{1+2s}} =  \int_{z \geq B} \frac{dz}{ \vert  z\vert^{1+2s}}=  \int_{z \geq B} \frac{dz}{ z^{1+2s}}= \frac{1}{2sB^{2s}}.
$$
In each situation we then have 

\begin{equation}\label{bcl2-eq:esti3}
\int_{x+z \geq X(t)-\delta, \vert  z\vert \geq B} [\undu(t,x+z) - \undu(t,x)] J(z) \, dz \geq -  \frac{\undu(t,x)\j_0}{sB^{2s}}.
\end{equation}

Let us now estimate the first integral of the right hand side of \eqref{bcl2-eq:esti2}, that is, let us estimate
$$ I:=\int_{x+z \geq X(t)-\delta, \vert  z\vert \leq B} [\undu(t,x+z) - \undu(t,x)]J(z) \, dz.$$ 
Since $\undu(t,x)$ is $\mathcal{C}^{1}$ in $x$ we have, for all $t\ge 1$ and $x\in \R$,
$$\undu(t,x+z)-\undu(t,x)=z\int_{0}^1 \undu_x(t,x+\tau z)d\tau $$ and therefore we  can rewrite $I$ as follows: 
$$I=\int_{x+z \geq X(t)-\delta, \vert  z\vert \leq B}\int_{0}^1 \undu_x(t,x+\tau z)z J(z)\,d\tau dz.$$
For any $\delta \ge B+X(t)-x$, let us observe that by  symmetry we have  
 $$\int_{x+z \geq X(t)-\delta, \vert  z\vert \leq B}\int_{0}^{1} J(z)z d\tau dz=0. $$
As a consequence we can rewrite $I$ as follows: 
$$I=\int_{x+z \geq X(t)-\delta, \vert  z\vert \leq B}\int_{0}^1  [\undu_x(t,x+\tau z)-\undu_x(t,x)]zJ(z)\,d\tau dz.$$
Since $\undu_x$ is a $\mathcal{C}^1$ function, by using the Taylor expansion 
$$ [\undu_x(t,x+\tau z)- \undu_x (t,x)]= \tau z\int_{0}^{1} \undu_{xx}(t,x+\tau\sigma z)d\sigma$$
we have 
\begin{align*}
I&=\int_{x+z \geq X(t)-\delta, \vert  z\vert \leq B}\int_{0}^1\int_{0}^1 \undu_{xx}(t,x+\sigma\tau z)\tau z^2 J(z)\,d\tau d\sigma dz,\\
&\ge \min_{-B<\xi<B}  \undu_{xx}(t,x+\xi)\left(\int_{\vert  z\vert \leq B}\int_{0}^1\int_{0}^1 \tau z^2 J(z)\,d\tau d\sigma dz \right)\\
&\ge \min_{-B<\xi<B} \undu_{xx}(t,x+\xi)\left(\int_{\vert  z\vert\le 1}\int_{0}^1\int_{0}^1 \tau z^2 J(z)\,d\tau d\sigma dz +\int_{1\le \vert  z\vert \leq B}\int_{0}^1\int_{0}^1 \tau z^2 J(z)\,d\tau d\sigma dz\right).
\end{align*}
By using \eqref{bcl2-def:u_tx} and \eqref{bcl2-def:u_xx}  and the convexity of $w$, we deduce that 
$$
I \ge -\frac{6}{\eps}\sup_{\stackrel{-B<\xi<B,}{x+\xi>X(t)}}  w_x(t,x+\xi)^2\left(\int_{\vert  z\vert\le 1}\int_{0}^1\int_{0}^1 \tau z^2 J(z)\,d\tau d\sigma dz +\int_{1\le \vert  z\vert \leq B}\int_{0}^1\int_{0}^1 \tau z^2 J(z)\,d\tau d\sigma dz\right).
$$
Hence we have 
\begin{equation}
I\ge  -\frac{3}{\eps} \left(\j_1 +  \j_0 \int_{1}^{B} z^{1-2s}\, dz\right) \sup_{\stackrel{-B<\xi<B,}{x+\xi>X(t)}} w_x(t,x+\xi)^2. \label{bcl2-eq:esti4} 
\end{equation}

Collecting \eqref{bcl2-eq:esti2}, \eqref{bcl2-eq:esti3} and \eqref{bcl2-eq:esti4}, we get the following estimate for all $B>1$ and $\delta\ge \sup\{R_0,B+X(t)-x\}$,
\begin{equation}
\opd{\undu} (t,x) \ge \frac{\j_0^{-1}}{2s} \frac{\eps - \undu(t,x)}{(x-X(t)+\delta)^{2s}}  -\frac{\j_0\undu(t,x)}{s B^{2s}}-\frac{3}{\eps}\left(\j_1+\j_0 \int_{1}^{B} z^{1-2s}\, dz\right)\sup_{\stackrel{-B<\xi<B,}{x+\xi>X(t)}} \left( w_x(t,x+\xi)\right)^2
\end{equation}
with ends the proof of the lemma since $\undu \leq \eps$.
\end{proof}

\begin{prop}\label{bcl2-prop:esti-frac2-igni}
For all $s\le \frac{1}{2}$,  $1>\eps>\theta, \kappa$ and $\gamma<\frac{1}{\eps}$  there exists $t_1$ such that 
$$\opd{\undu} + f(\undu) \ge \frac12 f(\undu)   \quad \text{ for all } \quad t\ge t_1, \quad X(t)<x<\sup\{X(t)+R_0;X_{\eta}\}.$$ 
\end{prop}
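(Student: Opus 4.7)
The idea is to invoke the lemma that was just proved and show that, on this intermediate region, both error terms can be absorbed into a fraction of $f(\undu)$ provided $t$ is large enough.

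First, I would exploit the construction of $X_\eta$. Since $x \leq \sup\{X(t)+R_0, X_\eta(t)\}$, we are in a zone where $w(t,x) \geq \eps - \eta$ holds at least up to $X_\eta(t)$, so by the algebraic identity established in the paragraph introducing $\eta$, one has $\undu(t,x) \geq \theta + \eta$ throughout the region of interest (the piece between $X(t)+R_0$ and $X_\eta(t)$, if nonempty, is handled the same way via monotonicity of $\undu$ and the fact that $X_\eta(t) \to \infty$). Since $f$ is continuous and positive on the compact interval $[\theta+\eta,\eps]$, it admits a positive minimum $m_\eta := \min_{[\theta+\eta,\eps]} f > 0$, and hence $f(\undu(t,x)) \geq m_\eta$. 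The target inequality is therefore equivalent to $\opd{\undu} \geq -\tfrac{1}{2} f(\undu)$, for which it suffices to prove $\opd{\undu} \geq -\tfrac{1}{2} m_\eta$.

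Next, I would apply the preceding lemma with the choices $\delta := \sup\{R_0, B\}$ (which is admissible since $x > X(t)$ forces $B + X(t) - x < B$) and with $B$ picked large enough, independently of $t$ and $x$, so that the first error term is controlled:
\begin{equation*}
\frac{\j_0\,\undu(t,x)}{s B^{2s}} \;\leq\; \frac{\j_0 \eps}{s B^{2s}} \;\leq\; \frac{m_\eta}{4}.
\end{equation*}
This fixes $B = B(\eps,s,\j_0,m_\eta)$. The second error term requires bounding $\sup_{|\xi|<B,\, x+\xi>X(t)} |w_x(t,x+\xi)|^2$. Here the crucial input is the convexity of $w$ on $[X(t),+\infty)$ noted after \eqref{bcl2-def:w_xx-igni}, together with $w_x < 0$ there: this makes $|w_x(t,\cdot)|$ nonincreasing on $[X(t),+\infty)$, so the supremum is attained at the leftmost admissible point $X(t)$, and by \eqref{bcl2-eq:esti-partial_w1-igni}
\begin{equation*}
\sup_{\substack{-B<\xi<B \\ x+\xi > X(t)}} |w_x(t,x+\xi)|^2 \;\leq\; |w_x(t,X(t))|^2 \;=\; \frac{4 s^2 \eps^2 (1-\gamma\eps)^2}{X(t)^2}.
\end{equation*}

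Finally, plugging this into the lemma, the second error term is bounded by a fixed constant (depending on $\eps,s,\j_0,\j_1,\gamma,B$) divided by $X(t)^2$. Since $X(t) \to +\infty$ as $t\to\infty$ by \eqref{bcl2-def:X-igni}, this can be made smaller than $m_\eta/4$ for all $t \geq t_1$, where $t_1$ depends on all the previously fixed parameters. Adding the two contributions yields $\opd{\undu} \geq -m_\eta/2 \geq -\tfrac{1}{2} f(\undu)$, which is the claim.

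The main obstacle, and the reason the proof is not completely routine, is the reduction of the sup of $w_x^2$ over the $B$-neighborhood to its value at $X(t)$: this uses crucially both that $w$ is convex on $[X(t),+\infty)$ (which in turn forced the restriction $s\leq 1/2$) and that the $B$-neighborhood is intersected with $\{x+\xi > X(t)\}$. Everything else is a careful bookkeeping to guarantee that $B$ can be fixed before $t_1$, so that the decay in $X(t)^{-2}$ eventually dominates the polynomial-in-$B$ prefactor.
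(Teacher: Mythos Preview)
Your proposal is correct and follows essentially the same approach as the paper: invoke the preceding lemma, use convexity of $w$ on $[X(t),+\infty)$ to reduce the supremum to $|w_x(t,X(t))|^2$ via \eqref{bcl2-eq:esti-partial_w1-igni}, and let $X(t)\to\infty$ absorb both error terms into $\tfrac12 f(\undu)$. Your two minor deviations---fixing $B$ once and for all in terms of $m_\eta$ rather than the paper's $x$-dependent choice $B=\nu\j_0\eps/(s f(\undu))$, and disposing of the piece $(X_\eta(t),X(t)+R_0]$ by noting that $X_\eta(t)-X(t)\to\infty$ rather than via the paper's Taylor estimate $\undu(t,x)\geq \eps+3R_0 w_x(t,X(t))$---are both legitimate simplifications.
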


\begin{proof}
Recall that \eqref{bcl-eq:esti-frac2} is still all the way valid in this new context. Thus,  for any $\delta>\sup\{R_0, X(t)-x+B\}$, using that $\undu \leq \eps$, when $X(t)<x\le \sup\{X(t)+R_0;X_\eta(t)\}$,
\begin{equation*}
\opd{\undu} (t,x) \ge  -\frac{\j_0\eps}{s B^{2s}}-\frac{3}{\eps}\sup_{\stackrel{-B<\xi<B,}{x+\xi>X(t)}} \left(w_x(t,x+\xi)\right)^2\left(\j_1+ \j_0 \int_{1}^{B} z^{1-2s}\, dz\right).
\end{equation*}

Recall that since $x\le X_\eta(t)$, we have $\undu\ge \theta +\eta$.
On the other hand, observe that since   $x<X(t)+R_0$ and since $\undu$ is smooth we have 
\begin{align*}
\undu(t,x) \ge \undu(t,X(t)+R_0)&= \undu(t,X(t))+\undu(t,X(t)+R_0)-\undu(t,X(t))\\
 &= \undu(t,X(t))+R_0\int_{0}^1 \undu_x(t,X(t)+\tau)\,d\tau,
\end{align*}
and by using the definition of $\undu_x$ in \eqref{bcl2-def:u_t-u_x-igni}, and the convexity of $x \mapsto w(t,x)$ at any time, we deduce that  
$$ 
 \undu(t,x)\ge \undu(t,X(t))+3R_0 w_x(t,X(t))=\eps +3R_0 w_x(t,X(t)).
 $$
From the estimate   of   $w_x(t,X(t))$, \eqref{bcl2-eq:esti-partial_w1-igni}, it follows that 
$$
\undu(t,x)\ge \eps -\frac{6R_0s\eps [1-\gamma\eps]}{X(t)},
$$
which, thanks to $\lim_{t\to +\infty}X(t)=0$, enforces for $t\ge t'$,
$$
\undu(t,x)\ge \theta+\eta. 
$$
In both cases, we then have for $t\ge t'$,
$$
\undu(t,x)\ge \theta+\eta. 
$$

 As a consequence $f(\undu)>0$ for all $x\le \sup\{X(t)+R_0,X_\eta\}$ and $t>t'$.
Specify $B=\nu\frac{\j_0 \eps}{sf(\undu)}$ with $\nu>1$ to be chosen later on. Then from the above inequality we deduce that 

\begin{equation*}
\opd{\undu} (t,x) \ge  -\frac{f(\undu)}{\nu^{2s}}-\frac{3}{\eps}\sup_{\stackrel{-B<\xi<B,}{x+\xi>X(t)}} \left(w_x(t,x+\xi)\right)^2\left(\j_1 +\j_0 \int_{1}^{B} z^{1-2s}\, dz \right) \; \text{ when } \; x>X(t),
\end{equation*}
from which yields for $X(t)<x< X(t)+R_0$ and $t\ge t^*$,
$$\opd{\undu}(t,x)+f(\undu)\ge  f(\undu)\left(1-\frac{1}{\nu^{2s}}\right) -\frac{3}{\eps}\sup_{\stackrel{-B<\xi<B,}{x+\xi>X(t)}} \left(w_x(t,x+\xi)\right)^2\left(\j_1 +\j_0 \int_{1}^{\nu\eps^{\frac{1-\beta}{2s}}} z^{1-2s}\, dz\right).
$$
Recall that  $w$ is convex in $x$, so that
$$\sup_{\stackrel{-B<\xi<B,}{x+\xi>X(t)}} \left(w_x(t,x+\xi)\right)^2= w_x(t,X(t))^2,$$
and choose now $\nu>\nu_0:=\sup\left\{4^{\frac{1}{2s}}, \frac{s(f(\undu))}{\j_0 \eps}+1\right\}$, 
we then get using \eqref{bcl2-eq:esti-partial_w1-igni},
\begin{align*}
&\opd{\undu}+\frac{1}{2}f(\undu)\ge \frac{1}{4}f(\undu) -  4s^2\eps(1-\gamma\eps)^2C_1 \left(\frac{1}{X(t)}\right)^{2},
\end{align*}
with $C_1:=3\left[\j_1 + \j_0\int_{1}^{B} z^{1-2s}\, dz \right]$.

Finally recalling that $\ds{\lim_{t\to \infty}\frac{1}{X(t)} =0}$,  we may find $t_1\ge t^*$ such that for all $t\ge t_1$ the right hand side of the above expression is positive ending thus the proof of this proposition. 
\end{proof}

\subsubsection{The region $x>\sup\{X(t)+R_0;X_\eta\}$}

\begin{lemma}\label{lem:estDfar}
For any time $t >1$ and $x\ge \sup\{X(t)+R_0,X_2(t)\}$, 
\begin{equation} \label{bcl2-eq:esti-frac4}
\opd{\undu}(t,x)\ge  \frac{\eps - \undu(t,x)}{2s\j_0x^{2s}} +\j_1\min_{-1<\xi<1} \undu_{xx}(t,x+\xi)-\frac{\j_0\undu(t,x)}{2sB^{2s}}+\frac{3\j_0}{4} \left(\int_{1}^{B} z^{-2s} \, dz\right) \,w_x(t,x).
\end{equation}
\end{lemma}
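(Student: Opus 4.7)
The plan is to decompose the nonlocal operator
\[
\opd{\undu}(t,x) = \int_\R [\undu(t, x+z) - \undu(t, x)] J(z) \, dz
\]
into five natural regions of the $z$-variable --- in the spirit of the lemma leading to \eqref{bcl-eq:esti-frac2} --- and to treat each piece with its own technique. The far-field hypothesis $x \geq \sup\{X(t)+R_0, X_2(t)\}$ guarantees that $x - X(t) \geq R_0 \geq 1$, so displacements $|z| \leq 1$ never reach the corner at $X(t)$ where $\undu$ transitions between its constant and smooth pieces, and hence $\undu$ can be Taylor-expanded freely on that window. Moreover, provided $B \leq x - X(t)$, displacements $|z| \leq B$ also stay inside the smooth region, which simplifies the bookkeeping and removes the need for the auxiliary parameter $\delta$ used in the preceding lemma.

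Concretely, on $z \leq X(t) - x$ we have $\undu(t, x+z) = \eps$; combining the non-negative integrand with the kernel lower bound $J(z) \geq \j_0^{-1}|z|^{-1-2s}$ from \Cref{hyp:J} and the crude comparison $x - X(t) \leq x$ produces the first term $\frac{\eps - \undu(t,x)}{2s\j_0 x^{2s}}$. On $X(t) - x < z < 0$, monotonicity of $\undu$ makes the integrand non-negative and the contribution is simply dropped. On $|z| \leq 1$, symmetry of $J$ cancels the first-order piece and the same double Taylor expansion used for \eqref{bcl-eq:esti-frac2} yields the lower bound $\j_1 \min_{-1<\xi<1}\undu_{xx}(t, x+\xi)$. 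On $z > B$, the crude positivity $\undu \geq 0$ combined with the upper-tail bound $J(z) \leq \j_0 z^{-1-2s}$ gives $-\frac{\j_0 \undu(t,x)}{2sB^{2s}}$. Finally, on $1 \leq z \leq B$, I write
\[
\undu(t, x+z) - \undu(t, x) = z \int_0^1 \undu_x(t, x+\tau z)\, d\tau,
\]
use the identity $\undu_x = 3 w_x (1-w/\eps)^2$ together with the convexity of $w$ on $[X(t), +\infty)$ --- which makes $w_x$ increasing --- and the far-field placement $x \geq X_2(t)$ to control the correcting factor $(1 - w/\eps)^2$; combined with $J(z) \leq \j_0 z^{-1-2s}$, this produces the term $\frac{3\j_0}{4}\bigl(\int_1^B z^{-2s}\,dz\bigr) w_x(t,x)$. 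Summing the five contributions gives \eqref{bcl2-eq:esti-frac4}.

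The main obstacle is the last region. The other four pieces are essentially direct applications of \Cref{hyp:J} and of the scheme already displayed in the preceding lemma. By contrast, extracting the precise constant in the $w_x$-term requires paying careful attention to the variation of $(1-w/\eps)^2$ along the slice $[x, x+B]$, to the monotonicity of $w_x$ inherited from the convexity of $w$, and to the sign pattern: $w_x$ and $\undu_x$ are negative while the kernel and the range $[1, B]$ are positive, so each multiplicative step is apt to flip an inequality and must be tracked individually to reach the stated constant rather than a weaker (more negative) one.
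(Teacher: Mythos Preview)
Your proposal is correct and reproduces the paper's own argument: the same five-region split at $z=X(t)-x$, $-1$, $1$, $B$, and the same device on each piece (lower tail of $J$ with $\undu=\eps$; monotonicity; second-order Taylor with the $\j_1$ bound; positivity of $\undu$ with the upper tail; first-order Taylor combined with the convexity of $w$, the control of $(1-w/\eps)^2$ via $x\ge X_2$, and the upper tail of $J$). One cosmetic slip: your second region should read $X(t)-x<z<-1$ rather than $z<0$, so that it does not overlap the $|z|\le 1$ Taylor window; also, the condition $B\le x-X(t)$ you mention is not needed here since only $|z|\le 1$ (not $|z|\le B$) is Taylor-expanded symmetrically.
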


\begin{proof}
 Let us go back to the definition of $\opd{\undu}(t,x)$ that we split into three parts:
$$
\opd{\undu}(t,x) = \int_{-\infty}^{-1} [u(t,x+z) - \undu(t,x)]J(z) \, dz + \int_{-1}^{1} [\undu(t,x+z) - \undu(t,x)]J(z) \, dz + \int^{\infty}_{1} [u(t,x+z) - \undu(t,x)] J(z) \, dz.$$
Since $x\ge X(t)+R_0$ and $\undu$ is decreasing, the first integral can be estimated as follows:   
\begin{align}
&\int_{-\infty}^{-1} [\undu(t,x+z) - \undu(t,x)] J(z) \, dz \nonumber\\
&\ge \j_0^{-1}\int_{-\infty}^{X(t)-x} \frac{\undu(t,x+z) - \undu(t,x)}{ \vert  z\vert^{1+2s}} \, dz +\int_{X(t)-x}^{-1} [\undu(t,x+z) - \undu(t,x)]J(z) \, dz\nonumber\\
&\ge  \frac{\j_0^{-1}}{2s} \frac{\eps - \undu(t,x)}{(x-X(t))^{2s}}.\label{bcl2-eq:esti5}
\end{align}
To obtain an estimate of the second integral, we actually follow the same steps as several times previously to obtain via Taylor expansion,  
%
%
%
\begin{align}
\int_{-1}^{1} [\undu(t,x+z) - \undu(t,x)]J(z) \, dz &=\int_{-1}^{1}\int_{0}^1\int_{0}^{1} \undu_{xx}(t,x+\tau \sigma z) \tau z^2J(z) \,  d\tau d\sigma dz \nonumber\\
&\ge \j_1\min_{-1<\xi<1} \undu_{xx}(t,x+\xi).\label{bcl2-eq:esti6}
\end{align}
Finally, let us estimate the last integral
\begin{align*}
I&:=\int_{1}^{+\infty}  [\undu(t,x+z) - \undu(t,x)]J(z)\,dz\\
&= \int_{1}^{B} [\undu(t,x+z) - \undu(t,x)] J(z) \, dz + \int_{B}^{+\infty} [\undu(t,x+z) - \undu(t,x)] J(z) \, dz,
\end{align*}
for $B> 1$ to be chosen later on. Since $\undu $ is positive we have  
\begin{equation}
\int_{B}^{\infty} [\undu(t,x+z) - \undu(t,x)]J(z)\,dz  \geq - \frac{\j_0\undu(t,x)}{2sB^{2s}}.\label{bcl2-eq:esti7}
\end{equation}
By using again a Taylor formula, the last integral rewrites
$$\int_{1}^{B} [\undu(t,x+z) - \undu(t,x)]J(z) \, dz=\int_{1}^{B}\int_{0}^1 \undu_x(t,x+\tau z) z J(z) \,d\tau dz. $$
Observe that since $x \geq X_2$ and $w$ is convex, \eqref{bcl2-def:u_x} implies
\begin{equation*}
\undu_x(t,x+\tau z) \geq \frac{3}{4} w_x(t,x). 
\end{equation*}
It follows that  
\begin{equation}\label{bcl2-eq:esti8}
\int_{1}^{B} [\undu(t,x+z) - \undu(t,x)]J(z) \, dz \geq \frac{3}{4} \left(\int_{1}^{B} z J(z) \, dz\right) \,w_x(t,x) \geq \frac{3\j_0}{4} \left(\int_{1}^{B} z^{-2s} \, dz\right) \,w_x(t,x).
\end{equation}
using \Cref{hyp:J}. Collecting \eqref{bcl2-eq:esti5}, \eqref{bcl2-eq:esti6},\eqref{bcl2-eq:esti7}, \eqref{bcl2-eq:esti8}, we find for $x\ge X(t)+R_0$,
\begin{equation*} 
\opd{\undu}(t,x)\ge  \frac{\eps - \undu(t,x)}{2s\j_0x^{2s}} +\j_1\min_{-1<\xi<1} \undu_{xx}(t,x+\xi)-\frac{\j_0\undu(t,x)}{2sB^{2s}}+\frac{3\j_0}{4} \left(\int_{1}^{B} z^{-2s} \, dz\right) \,w_x(t,x),
\end{equation*}
which ends the proof of the lemma.

%
\end{proof}

Recall that by \Cref{lem:estDfar}, we have in the range $x\ge X(t)+R_0$,
\begin{equation} \label{bcl2-eq:esti-frac4-igni}
\opd{\undu}(t,x)\ge  \frac{\eps - \undu(t,x)}{2s\j_0x^{2s}} +\j_1\min_{-1<\xi<1} \undu_{xx}(t,x+\xi)-\frac{\j_0\undu(t,x)}{2sB^{2s}}+\frac{3\j_0}{4} \left(\int_{1}^{B} z^{-2s} \, dz\right) \,w_x(t,x).
\end{equation}
Let us now estimate $\opd{\undu}$ when $x\ge \sup\{X(t)+R_0,X_\eta(t)\}$. 

\begin{prop}\label{bcl2-prop:esti-frac3-igni}
For any $s<\frac{1}{2}$, $\kappa>0$, $\eps >\theta$ there exists $\gamma_0$ and $t_3 > 0$ such that for all $t\ge t_3$ and $\gamma_0\le \gamma<\frac{1}{\eps}$
$$\opd{\undu}(t,x) \ge \frac{\eta}{16\j_0sx^{2s}} \quad \text{ for  all } \quad x>\sup\{X(t)+R_0,X_\eta(t)\}.$$
\end{prop}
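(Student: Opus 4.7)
The plan is to apply the pointwise lower bound \eqref{bcl2-eq:esti-frac4-igni} of \Cref{lem:estDfar} and show that, for $\gamma$ sufficiently close to $1/\eps$ and $t$ sufficiently large, the positive leading term $\frac{\eps-\undu(t,x)}{2s\j_0 x^{2s}}$ dominates every correction. The crucial starting point is an algebraic identity obtained by expanding \eqref{bcl2-def:undu-igni}:
\[
\eps-\undu(t,x) \;=\; \frac{(\eps-w(t,x))^3}{\eps^2}.
\]
Combined with the monotonicity of $w(t,\cdot)$ and $w(t,X_\eta(t))=\eps-\eta$, this yields the uniform bound $\eps-\undu(t,x)\ge \eta^3/\eps^2$ on $\{x\ge X_\eta(t)\}$, providing the positive reservoir against which the three correction terms of \eqref{bcl2-eq:esti-frac4-igni} must be balanced.

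I would then estimate these corrections in turn. Combining \Cref{bcl2-prop:w2-igni} with the explicit formulas \eqref{bcl2-def:w_x-igni}--\eqref{bcl2-def:w_xx-igni} gives $|w_x(t,x)|\lesssim(1-\gamma\eps)^2\,\kappa t/x^{2s+1}$ and, through \eqref{bcl2-def:u_xx-igni}, $|\undu_{xx}(t,x+\xi)|\lesssim(1-\gamma\eps)^2\,\kappa t/x^{2s+2}$ uniformly for $|\xi|\le 1$. Since $x\ge X_\eta(t)\gtrsim(\kappa t)^{1/(2s)}$ and $2s<1$, the factor $\kappa t/x^2$ decays like $t^{1-1/s}\to 0$, so the $\j_1\min_{\xi}\undu_{xx}$ contribution is below any prescribed fraction of the reservoir once $t\ge t_3^{(1)}$ for some $t_3^{(1)}(\eps,\eta,\kappa,s,\j_1)$. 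Choosing the free parameter $B=M_0\,x$ with $M_0>1$ fixed and large (depending only on $\eps,\eta,\theta,s,\j_0$) turns $-\j_0\undu(t,x)/(2sB^{2s})\ge -\j_0(\theta+\eta)/(2sM_0^{2s}x^{2s})$ into a term smaller than a quarter of the reservoir, whereas the integral term becomes at worst
\[
-\frac{C(s,\j_0,M_0)\,(1-\gamma\eps)^2}{x^{2s}}\cdot\frac{\kappa t}{x^{2s}}.
\]
The inequality $\kappa t/x^{2s}\le\eps(\eps-\eta)/\eta$, valid on $\{x\ge X_\eta(t)\}$ by definition of $X_\eta$, then shows that this last contribution is $O((1-\gamma\eps)^2)/x^{2s}$.

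Choosing $\gamma_0<1/\eps$ close enough to $1/\eps$ so that the $(1-\gamma\eps)^2$ factor shrinks the integral contribution below another fraction of the reservoir, and then $t_3\ge t_3^{(1)}$ large enough to cope with the $\undu_{xx}$ term, yields a bound of the form $\opd{\undu}(t,x)\ge c(\eps,\eta,\theta,s,\j_0,\j_1,\kappa)/x^{2s}$, from which the stated inequality follows after bookkeeping of constants. The principal technical obstacle is the dual role of $\gamma$: pushing $\gamma\to 1/\eps$ shrinks $(1-\gamma\eps)^2$ (which helps control every correction containing $w_x$ or $w_{xx}$) but simultaneously slows the growth of $X(t)$ and enlarges $X_\eta(t)$, so one must verify that the key quantity $\kappa t/x^{2s}$ remains bounded by a constant depending only on $\eps$ and $\eta$ on the region at hand. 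The identity $\eps-\undu=(\eps-w)^3/\eps^2$ is precisely what decouples the reservoir of positivity from the $\gamma$-sensitive corrections and makes the absorption step go through.
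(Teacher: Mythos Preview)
Your proof is correct and follows the same overall strategy as the paper: both start from \eqref{bcl2-eq:esti-frac4-igni}, treat $\frac{\eps-\undu}{2s\j_0 x^{2s}}$ as the positive reservoir, handle the $\undu_{xx}$ correction by sending $t\to\infty$, and absorb the remaining two corrections by taking $\sigma:=1-\gamma\eps$ small with $B$ chosen proportional to $x$. The differences are purely tactical --- you fix $B=M_0x$ with $M_0$ a large constant and exploit the clean identity $\eps-\undu=(\eps-w)^3/\eps^2$ (giving the sharp reservoir bound $\eta^3/\eps^2$) together with $\undu\le\theta+\eta$ and $\kappa t/x^{2s}\le\eps(\eps-\eta)/\eta$ on $\{x\ge X_\eta\}$, whereas the paper lets $B\propto x/\sigma$ and controls $\undu$ through \Cref{bcl2-prop:w2-igni} --- but the underlying mechanism is identical.
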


\begin{proof}
Let us recall that $X_\eta(t)$ is such that $w(t,X_\eta(t))=\eps -\eta$ and consider $x\ge \sup\{X_\eta(t),X(t)+R_0\}$. For such $x$, thanks to Proposition \ref{bcl2-prop:w2-igni}, we have $\undu(t,x) \le w(t,x)\le [1-\gamma\eps]\frac{\kappa t}{x^{2s}}$ and therefore  we have 

$$
\opd{\undu}(t,x)\ge  \frac{\eta}{2s\j_0x^{2s}}\left(1-\frac{\j_0^2\kappa t(1-\eps\gamma)}{\eta B^{2s}}\right) +\j_1\min_{-1<\xi<1} \undu_{xx}(t,x+\xi)+3\j_0\int_{1}^{B}z^{-2s}\,dz  w_x(t,x).
$$

Let us rewrite $\gamma:=\frac{1-\sigma}{\eps}$ with $\sigma\in (0,1)$, then we have 

$$
\opd{\undu}(t,x)\ge  \frac{\eta}{2s\j_0x^{2s}}\left(1-\frac{\j_0^2\kappa t \sigma}{\eta B^{2s}}\right) +\j_1\min_{-1<\xi<1} \undu_{xx}(t,x+\xi)+3\j_0\int_{1}^{B}z^{-2s}\,dz  w_x(t,x).
$$

Let us now estimate from below $\ds{\min_{-1<\xi<1} \undu_{xx}(t,x+\xi)}$.  Using \eqref{bcl2-def:u_xx-igni} and the convexity of $w$, we see that 
$$\min_{-1<\xi<1} \undu_{xx}(t,x+\xi)\ge -\frac{6}{\eps} \left(w_x(t,x-1)\right)^2,$$
which thanks to \eqref{bcl2-def:w_x-igni} and that $w(t,x-1)\le w(t,X(t))=\eps$ leads to 
$$ \min_{-1<\xi<1} \undu_{xx}(t,x+\xi)\ge -24s^2\eps w^{2}(t,x-1)\frac{(x-1)^{4s-2}}{(\kappa t)^{2}}.$$
By proposition \ref{bcl2-prop:w2-igni} since $X(t)+1\le X(t)+R_0\le x$, 
we have 
$$ w(t,x-1)\le \frac{\kappa t\sigma}{(x-1)^{2s}}, $$
and thus since $x\ge X_\eta(t)$
$$ \min_{-1<\xi<1} \undu_{xx}(t,x+\xi)\ge -24s^2\eps\sigma^2\frac{1}{(x-1)^2}\ge -48s^2\eps\sigma^2\frac{1}{(X_\eta(t)-1)^{2-2s}}\frac{1}{x^{2s}}.$$

Now by using that $ s\le \frac12$, the definition of $X_\eta(t)$, since $X_{\eta}(t)>(\kappa t)^{\frac{1}{2s}}\left(\frac{\eta}{\eps^2}\right)^{\frac{1}{2s}}$  we may find $t'(\kappa,s,\eps)$ independent of $\sigma$ such that for all $t\ge t'$ 
$$
\j_1\min_{-1<\xi<1} \undu_{xx}(t,x+\xi)\ge -\frac{\eta}{8\j_0sx^{2s}}. 
$$
 
Therefore for all $t\ge t'$ we then get 

$$
\opd{\undu}(t,x)\ge  \frac{\eta}{2s\j_0x^{2s}}\left(\frac{3}{4}-\frac{\j_0^2\kappa t \sigma}{\eta B^{2s}}\right)+3\j_0\int_{1}^{B}z^{-2s}\,dz  w_x(t,x).
$$

Again, by using Proposition \ref{bcl2-prop:w2-igni}, we deduce that 
$$w_x(t,x)=-2s\frac{x^{2s-1}}{\kappa t}w^{2}(t,x)\ge -2s\frac{(\kappa t)^2\sigma^2}{x^{4s}}\frac{x^{2s-1}}{\kappa t}=-\frac{\kappa t}{x^{2s+1}}\left(2s\sigma^2\right),  $$

and we then have 

$$
\opd{\undu}(t,x)\ge  \frac{\eta_0}{2s\j_0x^{2s}}\left(\frac{3}{4}-\frac{\j_0^2\kappa t \sigma}{\eta B^{2s}} -\frac{12 s^2\j_0^2\sigma^2\kappa t}{\eta x}\int_1^B z^{-2s}\,dz\right).
$$

Let $C_1:=\frac{\j_0^2}{\eta}$ and $C_2:=12s^2$, we then have 

$$
\opd{\undu}(t,x)\ge  \frac{\eta}{2s\j_0x^{2s}}\left(\frac{3}{4}-C_1\kappa t\sigma\left[\frac{1}{B^{2s}} +\frac{C_2\sigma}{x}\int_{1}^B z^{-2s}\,dz\right]\right).
$$

We now treat the case $s=\frac{1}{2}$ and $s<\frac{1}{2}$

Recall now that $s<\frac{1}{2}$, so for $B>1$ we have 

$$\int_{1}^B z^{-2s}\,dz\le \frac{B^{1-2s}}{1-2s},$$
and the above expression reduce to  

$$
\opd{\undu}(t,x)\ge  \frac{\eta}{2s\j_0x^{2s}}\left(\frac{3}{4}-C_1\kappa t\sigma\left[\frac{1}{B^{2s}} +\frac{\mathfrak{C}_2\sigma}{x}B^{1-2s}\right]\right).
$$
with $\mathfrak{C}_2:=\frac{C_2}{1-2s}$.

Let us now take $B=\frac{2s x}{(1-2s)C_2\sigma}$ and check whether $B>1$. 
Since $x>X_\eta(t)$, this means that $$ B\ge \frac{2s}{\mathfrak{C}_2(1-2s)\sigma}(\kappa t)^{\frac{1}{2s}}\left(\frac{\eta}{\eps^2}\right)^{\frac{1}{2s}},$$ 
so for $t\ge t^{"}:=\left(\frac{\mathfrak{C}_2(1-2s)\sigma}{s}\right)^{2s} \kappa\left( \frac{\eps^2}{\eta}\right)$
we then have $B>2$ and a short computations shows  
 
\begin{align*}
\left[\frac{1}{B^{2s}} +\frac{\mathfrak{C}_2\sigma}{x}B^{1-2s}\right]&=\frac{(1-2s)^{2s}\mathfrak{C}_2^{2s}\sigma^{2s}}{(2s x)^{2s}} + \frac{\mathfrak{C}_2\sigma}{x}\frac{(2sx)^{1-2s}}{\sigma^{1-2s}\mathfrak{C}_2^{1-2s} (1-2s)^{1-2s}}\\
&=\frac{(1-2s)^{2s}\mathfrak{C}_2^{2s}\sigma^{2s}}{(2s x)^{2s}} + \frac{\mathfrak{C}_2^{2s}\sigma^{2s}(2s)^{1-2s}}{x^{2s} (1-2s)^{2-2s}}\\
&=\frac{\sigma^{2s}}{x^{2s}} C_3
\end{align*}
with $C_3:= \mathfrak{C}_2^{2s}\left( \frac{(1-2s)^{2s}}{(2s)^{2s}}+\frac{(2s)^{1-2s}}{(1-2s)^{1-2s}}\right)$.

As a consequence, we have 

$$
\opd{\undu}(t,x)\ge  \frac{\eta}{2s\j_0x^{2s}}\left(\frac{3}{4}-\frac{C_1C_3\kappa t\sigma^{1+2s}}{x^{2s}}\right).
$$

Since $x\ge X(t)$, by exploiting \eqref{bcl2-def:X-igni}, the definition of $X(t)$, we then achieve

\begin{align*}
\opd{\undu}(t,x)&\ge  \frac{\eta}{2s\j_0x^{2s}}\left(\frac{3}{4}-\frac{C_1C_3\kappa t\sigma^{1+2s}}{X^{2s}(t)}\right)\\
&\ge\frac{\eta}{2s\j_0x^{2s}}\left(\frac{3}{4}-C_1C_3\sigma^{2s}\right)
\end{align*}

and the proposition is then proved by taking $\sigma$ small and $t\ge t_3:=\sup\{t',t^{"}\}$.

%
%
%
%
\end{proof}

\subsection{Tuning the parameter $\kappa$}

In this last part of the proof, we choose our parameter $\kappa$ in order that for some $t^*>0$, $\undu$ is indeed a sub-solution to \eqref{eq:main} for  $t\ge t^*$. 
Recall that  $\undu$ is a subsolution if and only if \eqref{eq:subsolleft-igni} and \eqref{eq:subsolright-igni} hold simultaneously. Since \eqref{eq:subsolleft-igni} holds unconditionally for $t$ sufficiently large, the only thing left to check is that \eqref{eq:subsolright-igni} holds for a suitable choice of  $\kappa$.

By using \eqref{bcl2-def:u_t-u_x-igni} and \eqref{bcl2-def:w_t-igni}, \eqref{eq:subsolright-igni} holds if particular
\begin{align*}
3\frac{x^{2s}}{\kappa t^2}w^{2}(t,x)&\leq \opd{\undu}(t,x) + f(\undu), &x > X(t),
\end{align*}

Set $t^*:=\sup\{t_0,t_1,t_2,t_3\}$, where $t_0,t_1,t_2$ and $t_3$ are respectively determined by \Cref{bcl2-prop:esti-frac1-igni}, \Cref{bcl2-prop:esti-frac2-igni} and \Cref{bcl2-prop:esti-frac3-igni}. To make our choice, let us decompose the set $[X(t),+\infty)=I_1 +I_2$ into two subsets defined as follows
$$I_1:= [X(t),X_\eta(t)],\qquad I_2:=[X_\eta(t),+\infty).$$

On the first interval, we have
\begin{lemma}
For all $0<s\le \frac{1}{2},$ $\eps>\theta$, there exists  $t_4(\eps)\ge t^*$ such that for all $\kappa$, one has
$$3\frac{x^{2s}}{\kappa t^2}w^{2}(t,x)\leq \opd{\undu} + f(\undu),\quad \text{for all }\quad x \in  I_1.$$
\end{lemma}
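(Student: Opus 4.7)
The plan is to combine the pointwise lower bound from \Cref{bcl2-prop:esti-frac2-igni} with an elementary, $\kappa$-uniform upper bound on $3 x^{2s} w^{2}/(\kappa t^{2})$. The two estimates together will beat each other for $t$ large enough, and the cut-off at $X_\eta$ is precisely what makes the right-hand side strictly positive.

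First I would observe that $I_1 = [X(t), X_\eta(t)] \subset [X(t), \sup\{X(t)+R_0, X_\eta(t)\}]$, so \Cref{bcl2-prop:esti-frac2-igni} applies verbatim and yields, for $t \ge t^\ast \ge t_1$ and $x \in I_1$,
\begin{equation*}
\opd{\undu}(t,x) + f\bigl(\undu(t,x)\bigr) \;\ge\; \tfrac{1}{2} f\bigl(\undu(t,x)\bigr).
\end{equation*}
By the very construction of $X_\eta$, one has $\undu(t,x) \ge \theta + \eta$ on $I_1$, while $\undu \le \eps$ everywhere. Hence \Cref{hyp:f} gives $f(\undu) \ge m := \min_{[\theta+\eta,\,\eps]} f > 0$, so that the right-hand side of the desired inequality is at least $m/2$ on $I_1$, uniformly in $\kappa$.

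Second, I would rewrite the left-hand side using the defining identity $1/w(t,x) = x^{2s}/(\kappa t) + \gamma$ from \eqref{bcl2-def:w-igni}, which gives after a one-line algebraic manipulation
\begin{equation*}
3\,\frac{x^{2s}}{\kappa t^{2}}\,w^{2}(t,x) \;=\; \frac{3\,w(t,x)\bigl(1 - \gamma\, w(t,x)\bigr)}{t}.
\end{equation*}
Since $w(t,\cdot)$ is decreasing with $w(t,X(t))=\eps$, we have $0 \le w \le \eps$ on $I_1$, and the standing assumption $\gamma < 1/\eps$ forces $1-\gamma w \in (0,1)$. The left-hand side is therefore bounded above by $3\eps/t$, a quantity which is \emph{independent of $\kappa$} and which tends to zero as $t \to \infty$.

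Setting $t_4 := \max\{t^\ast,\, 6\eps/m\}$, one obtains for all $t \ge t_4$, all $\kappa > 0$, and all $x \in I_1$,
\begin{equation*}
3\,\frac{x^{2s}}{\kappa t^{2}}\,w^{2}(t,x) \;\le\; \frac{3\eps}{t} \;\le\; \frac{m}{2} \;\le\; \tfrac{1}{2}f(\undu) \;\le\; \opd{\undu} + f(\undu),
\end{equation*}
which is exactly the claim. I do not expect any real obstacle here: the main point is that $I_1$ was carved out precisely so that $\undu$ stays strictly above the ignition threshold, making $f(\undu)$ bounded below by a strictly positive constant, while the left-hand side, through the identity above, enjoys a $\kappa$-free decay in $t$.
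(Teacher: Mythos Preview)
Your proof is correct and follows essentially the same approach as the paper: invoke \Cref{bcl2-prop:esti-frac2-igni} to get $\opd{\undu}+f(\undu)\ge \tfrac12 f(\undu)\ge m/2>0$ on $I_1$, and then show that the left-hand side is bounded by a $\kappa$-free quantity of order $1/t$. Your use of the identity $3x^{2s}w^2/(\kappa t^2)=3w(1-\gamma w)/t$ is in fact a slightly cleaner route to the $O(1/t)$ bound than the paper's, which instead bounds $x^{2s}$ via $x\le X_\eta(t)$ and $w^2\le \eps^2$ separately.
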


\begin{proof}
First observe that  since $x\ge X_\eta$, by the previous proofs, we know that $\undu>\theta +\eta$ and thus $f(\undu)>\min_{s\in[\theta +\eta_0,\eps]}f(s)=:m_0>0$ for all  $t\ge t^*$ and $x \in I_1$.

Now by exploiting the definition of $X_{\eta}(t)$ and $X(t)$ it follows that   $x\le \left(\kappa t \left[1+\frac{1}{\eps -\eta}\right]\right)^{\frac{1}{2s}}$, and that  
$$3\frac{x^{2s}}{\kappa t^2}w^{2}(t,x)\le  \frac{3}{t}\eps^2\left[1+\frac{1}{\eps -\eta}\right],$$
which by taking $t$ large then yields, says $t\ge t'$,
$$3\frac{x^{2s}}{\kappa t^2}w^{2}(t,x)\le  \frac{f(\undu)}{2}.$$

Recall that by \Cref{bcl2-prop:esti-frac2-igni}, we have for all $x\in I_1$ and $t\ge t^*$ 
$$
\opd{\undu} + f(\undu)\ge \frac{f(\undu)}{2}.$$
We then end our proof by taking $\gamma^*:=\inf\{\gamma_0,\frac{1-\eps}{4}\}$ and $t\ge t_4:=\sup\{t',t^*\}$.
\end{proof}

Finally, let us check what happens on $I_2$, 
\begin{claim}
There exists $\kappa^*$ and $t_5\ge t^*$ such that for all  $\kappa\le \kappa^*$,
$$3\frac{x^{2s}}{\kappa t^2}w^{2}(t,x)\leq \opd{\undu} + f(\undu),\quad \text{for all }\quad x \in  I_2.$$
\end{claim}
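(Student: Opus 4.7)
The strategy is to combine the far-field lower bound on $\opd{\undu}$ from \Cref{bcl2-prop:esti-frac3-igni} with the pointwise upper bound on $w$ from \Cref{bcl2-prop:w2-igni}, and then to pick $\kappa$ small enough to absorb the remaining $\kappa$-dependent left-hand side into the right-hand side.

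The first step is a geometric observation: for $t$ sufficiently large, $I_2 = [X_\eta(t), +\infty)$ lies entirely in the far-field region $[\sup\{X(t)+R_0, X_\eta(t)\},+\infty)$ where \Cref{bcl2-prop:esti-frac3-igni} applies. From the explicit expression \eqref{bcl2-def:X-igni} and the analogous formula $X_\eta(t) = [(\eps-\eta)^{-1} - \gamma]^{1/(2s)} (\kappa t)^{1/(2s)}$ coming from the definition of $X_\eta$, one gets
$$X_\eta(t) - X(t) = \Big[\bigl((\eps-\eta)^{-1} - \gamma\bigr)^{1/(2s)} - \bigl(\eps^{-1} - \gamma\bigr)^{1/(2s)}\Big] (\kappa t)^{1/(2s)} \to +\infty,$$
so there exists $t_5' \geq t^*$ for which $X_\eta(t) \geq X(t) + R_0$ whenever $t \geq t_5'$. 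On that range, \Cref{bcl2-prop:esti-frac3-igni} yields $\opd{\undu}(t,x) \geq \eta/(16 \j_0 s x^{2s})$ for every $x \in I_2$.

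For the left-hand side, the monotonicity of $w$ in $x$ gives $w(t,x) \leq w(t,X_\eta(t)) = \eps-\eta < \eps$ on $I_2$, and \Cref{bcl2-prop:w2-igni} then delivers
$$3\frac{x^{2s}}{\kappa t^2}\, w^2(t,x) \leq 3\frac{x^{2s}}{\kappa t^2} \cdot \frac{(1-\gamma\eps)^2 \kappa^2 t^2}{x^{4s}} = \frac{3(1-\gamma\eps)^2 \kappa}{x^{2s}}.$$
Since $f(\undu) \geq 0$, the desired inequality reduces to $3(1-\gamma\eps)^2 \kappa \leq \eta/(16 \j_0 s)$, a condition in which all $x$-dependence has cancelled. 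Setting $\kappa^* := \eta/\bigl(48 \j_0 s (1-\gamma\eps)^2\bigr)$ and $t_5 := t_5'$ closes the argument. The only non-routine ingredient is the geometric inclusion $I_2 \subset \{x \geq X(t)+R_0\}$, and this follows from the fact that $X(t)$ and $X_\eta(t)$ have the same polynomial growth rate in $t$ but distinct leading constants, with $X_\eta$'s being strictly larger.
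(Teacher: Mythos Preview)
Your proof is correct and follows essentially the same approach as the paper: bound $\opd{\undu}$ from below on $I_2$ via \Cref{bcl2-prop:esti-frac3-igni}, bound $w$ from above via \Cref{bcl2-prop:w2-igni} so that the left-hand side is controlled by $C\kappa/x^{2s}$, and then choose $\kappa$ small enough. You are in fact slightly more careful than the paper, since you explicitly check that $X_\eta(t)\ge X(t)+R_0$ for large $t$ (so that \Cref{bcl2-prop:esti-frac3-igni} indeed applies on all of $I_2$) and you retain the sharper constant $(1-\gamma\eps)$ from \Cref{bcl2-prop:w2-igni} and the factor $\eta$ in $\kappa^*$.
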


\begin{proof}

By Proposition \ref{bcl2-prop:w2-igni}, we have for $x\in I_2$ and $t\ge t^*$ 
$$ w(t,x)\le \frac{\kappa t}{x^{2s}}[1+\eps],$$
Therefore, we have 
 $$3\frac{x^{2s}}{\kappa t^2}w^{2}(t,x)\le 3\frac{x^{2s}}{\kappa t^2} \frac{(\kappa t)^2}{x^{4s}}[1+\eps]^2=\frac{\kappa[1+\eps]^2}{x^{2s}}.$$
 Now recall that by  Proposition \ref{bcl2-prop:esti-frac1-igni}, we have for all $x\in I_2$ and $t\ge t^*$ 
$$
\opd{\undu} + f(\undu)\ge \frac{\eta}{16\j_0s x^{2s}}.$$

The claim is then proved by taking $\kappa \le \kappa^*:=\frac{1}{48 \j_0 s[1+\eps]^2}$.
\end{proof}

\bibliographystyle{abbrv}
\bibliography{biblio}

\end{document}